%
%
\documentclass[11pt]{amsart}
\addtolength{\textwidth}{2cm}
\addtolength{\evensidemargin}{-1cm}
\addtolength{\oddsidemargin}{-1cm}
\usepackage{amsmath,amsthm,amsfonts,amssymb,verbatim,eucal}
\usepackage[all]{xy}
\usepackage{extarrows}
\numberwithin{equation}{section}

\newtheorem{thm}[equation]{Theorem} 

\newtheorem{lemma}[equation]{Lemma} 

\newtheorem{example}[equation]{Example}
\newtheorem{remark}[equation]{Remark}

\DeclareMathOperator{\gr}{gr}

\newcommand{\DOT}{\setlength{\unitlength}{1pt}\begin{picture}(2.5,2)
               (1,1)\put(2.5,2.5){\circle*{2}}\end{picture}}
\newcommand{\bu}{\DOT}

\newcommand{\ld}{\lambda}

\newcommand{\Hom}{\mbox{\rm Hom\,}}

\newcommand{\ot}{\otimes}

\newcommand{\cH}{\mathcal{H}}

\DeclareMathOperator{\AW}{AW}
\DeclareMathOperator{\EZ}{EZ}

\newcommand{\Wedge}{\textstyle\bigwedge}

\newcommand{\sig}{\sigma}
                                   
\begin{document}
\begin{abstract}
We define group-twisted 
Alexander-Whitney and Eilenberg-Zilber maps
for converting between bimodule resolutions of skew group algebras.
These algebras are the natural semidirect products
recording actions of finite groups 
by automorphisms.
The group-twisted chain maps allow
us to transfer information between resolutions for use
in homology theories,
for example, in those governing deformation theory.
We show how to translate in particular from the default 
(but often
cumbersome) reduced bar resolution
to a more convenient twisted product resolution.
This provides a more universal approach to some known results
classifying PBW deformations.
\end{abstract}

\title[Group-twisted Alexander-Whitney and Eilenberg-Zilber maps]
{  Group-twisted Alexander-Whitney and Eilenberg-Zilber maps}

\date{April 19, 2020}

\subjclass[2010]{16E40}

\thanks{Key words: deformation theory, skew group algebras, Hochschild cohomology}

\author{A.\ V.\ Shepler}
\address{Department of Mathematics, University of North Texas,
Denton, TX 76203, USA}
\email{ashepler@unt.edu}
\author{S.\  Witherspoon}
\address{Department of Mathematics\\Texas A\&M University\\
College Station, TX 77843, USA}\email{sjw@math.tamu.edu}
\thanks{The first author was partially supported by Simons grant 429539.
The second author was partially supported by NSF grant DMS-1665286.
Corresponding Author: Anne Shepler.}

\maketitle

\section{Introduction}

Skew group algebras encode actions of groups on other algebras by automorphisms.
Deformations of skew group algebras include, for example,
symplectic reflection algebras, rational Cherednik algebras,
graded Hecke algebras, and Drinfeld orbifold algebras;
e.g., see~\cite{Drinfeld,EG,Lusztig89,RamShepler}. 
These are all PBW deformations of $S(V)\rtimes G$ 
for the action of a finite group $G$ on a finite dimensional 
vector space $V$ with symmetric algebra $S(V)$.
The Hochschild cohomology of $S(V)\rtimes G$ in turn captures
important information about its deformations.
In positive characteristic, the situation is more complicated 
than over fields of characteristic 0,
as the group algebra $kG$ itself is not always semisimple.
There are a number of recent papers on the representation theory of such
deformations in positive characteristic;
see, e.g.,~\cite{BalagovicChen,BalagovicChen2,Bellamy-Martino,Brown-Changtong,Norton}. 
In this short note, we show how to understand
some prior results on PBW deformations, in positive
characteristic particularly,
more conceptually in terms of 
Alexander-Whitney and Eilenberg-Zilber
maps twisted by group actions.

We begin in Section~\ref{sec:NW1} by recalling a construction of twisted product
resolutions for skew group algebras.
In Section~\ref{sec:AW-EZ}, we define the group-twisted Alexander-Whitney
and Eilenberg-Zilber maps informing deformation theory. 
We prove in Section~\ref{sec:conversion} 
our main Theorem~\ref{thm:chain} and show that composing
with these maps provides a way to embed convenient resolutions into bar resolutions
as direct summand subcomplexes,
generalizing results from earlier papers.
In Section~\ref{sec:deformations}, we explain how some previous results in deformation
theory follow from our main theorem in a more conceptual way. 

Throughout, $k$ will be a field of arbitrary characteristic,
and $\ot = \ot_k$. 


\section{Resolutions for skew group algebras}\label{sec:NW1}

Consider a finite group $G$ acting on a $k$-algebra $S$ by automorphisms. 
The resulting skew group algebra
$S\rtimes G$ is the free $S$-module with basis $G$ and multiplication
\[
   (sg) \cdot (s'g') = s ({}^gs')\, g g'
   \quad\text{for all}\quad s,s' \in S\ \text{ and }\ g, g'\in G
\]
where $ {}^g s'$ denotes
the action of $g$ on $s'$.  
We recall a construction from~\cite{ueber,twisted}.

\subsection*{The twisted product resolution}
Consider projective resolutions
\begin{equation*}
\begin{aligned}
\text{(i)}\quad&
C_{\DOT}: \ \ \ldots \rightarrow C_2 \rightarrow C_1 \rightarrow C_0 \rightarrow 0
\ \ \  \text{ of } kG \text{ as a $kG$-bimodule, and}&
\\
\text{(ii)} \quad&
D_{\DOT}:\ \  \ldots \rightarrow D_2 \rightarrow D_1 \rightarrow D_0 \rightarrow 0\
\ \text{ of } S \text{ as an $S$-bimodule.}&
\end{aligned}
\end{equation*}
We take $C_{\DOT}$ to be $G$-graded, with group action compatible 
with the grading, that is, 
$$
g_1\big((C_i)_{g_2}\big)g_3 = (C_i)_{g_1g_2g_3}
\quad\text{ for all }\ g_1,g_2,g_3\in G
\ \text{and all degrees } i\, ,
$$ 
and choose a resolution $D_{\DOT}$ upon which $G$ acts:
Each $D_{i}$ should be a left $kG$-module 
with $g\cdot (s\cdot d) = {}^gs\cdot (g\cdot d)$
for $g$ in $G$, $s$ in $S$, and $d$ in $D$
and with differentials $kG$-module homomorphisms. 
Then  $D_{\DOT}$ is 
said to be {\em compatible} with the twisting map
given by the group action (see~\cite[Definition~2.17]{twisted}). 
For example, these conditions all hold when $C_{\bu}$ is the bar resolution
or reduced bar resolution of $kG$ and $D_{\bu}$ is the Koszul resolution
of a Koszul algebra $S$ with action of $G$ by automorphisms
(see~\cite[Prop 2.20(ii)]{twisted}).

We combine the above resolutions of $kG$ and $S$
to construct a resolution of $S \rtimes G$:
The {\em twisted product resolution} $C \ot^G D$ of the algebra $S\rtimes G$
is given as a complex of vector spaces by
the total complex of the double complex 
$C_{\DOT}\ot D_{\DOT}$\,: 
\begin{equation}\label{eqn:res-X}
(C\ot^G D)_n = \bigoplus_{i+j=n} C_i \ot D_j\, .
\end{equation}
We imbue $C \ot^G D$ with an $(S\rtimes G)$-bimodule structure on its components 
by defining an action of $S$ on the left 
and an action of $kG$ on the right given by
\[
   s \cdot (c\ot d)\cdot h 
   \ = \
   ch \ot {}^{(gh)^{-1}}s \cdot {}^{h^{-1}}d
\quad\text{ for }\ \ 
g,h\in G,\ c\in C_g,\ d\in D,\ s\in S\, .
\]
Then the complex $C \ot^G D$, augmented by $S\rtimes G$,
is indeed an exact sequence of $(S\rtimes G)$-bimodules
(see~\cite{twisted} or ~\cite[\S4]{quad}).
It is a projective resolution of $S\rtimes G$ when
each $(C\ot^G D)_n$ is projective 
as a $(S\rtimes G)$-bimodule.
This is the case, for example, when $D$ is a Koszul resolution
of a Koszul algebra and $C$ is the bar resolution of $kG$.
In this case, each $D_j$ is a free $S$-module and each $C_i$ is a free
$kG$-module, and it can be shown directly that $C_i\ot D_j$
is a free $(S\rtimes G)$-bimodule 
(with basis given by tensoring the basis elements of $C_i$ with those
of $D_j$).

\subsection*{Reduced bar resolutions}
We now consider a special case of the twisted product resolution.
Fix $C = \overline{B}_G$ and $D=\overline{B}_S$,
the reduced bar resolutions for $G$ and $S$, respectively:
$$
(\overline{B}_G)_n = kG \ot \overline{kG}^{\, \ot n}\ot kG
\quad\text{ and }\quad
(\overline{B}_S)_n = S\ot \overline{S}^{\, \ot n}\ot S
$$ where 
$\overline{kG}=kG/(k\cdot 1_G)$ as a vector space, and similarly 
$\overline{S}=S/(k\cdot 1_S)$.
We fix some choice of section $\overline{S}\hookrightarrow S$
and choose the embedding
$G-\{1\} \hookrightarrow G$ to define a section $\overline{kG}\hookrightarrow kG$.
There is a twisted product resolution (see~\cite[Proposition 2.20(ii) and Corollary~3.12]{twisted})
of $S \rtimes G$ formed by these reduced bar complexes:
$$
C\ot^G D = \overline{B}_G\ot^G \, \overline{B}_S\, .
$$  
It is given as a vector space in degree $n$ by
$$
(\overline{B}_G\ot^G \, \overline{B}_S)_n
=\bigoplus_{\ell=0}^{n}\ 
 kG\ot (\overline{kG})^{\,\ot (n-\ell)}\ot kG\ot
        S\ot \overline{S}^{\,\otimes \ell}\ot S .
$$

We compare this twisted product resolution of $S\rtimes G$
with its reduced bar resolution $\overline{B}_{S\rtimes G}$.
Here,
$\overline{(S\rtimes G)}= (S\rtimes G)/ (k\cdot 1_{S\rtimes G})$
with $1_{S\rtimes G}=1_S\cdot 1_G$ and 
$$ 
(\overline{B}_{S\rtimes G})_n = 
(S\rtimes G)
\ot 
\overline{(S\rtimes G)}^{\, \otimes n}
\ot 
(S\rtimes G)\, .
$$


\section{Group-twisted Alexander-Whitney and Eilenberg-Zilber maps}\label{sec:AW-EZ}

We now define analogs of the Alexander-Whitney
and Eilenberg-Zilber maps in the environment of group actions.
(Compare with~\cite{GG,LZ}
for the case of tensor products of algebras
and~\cite{GNW} for twisted tensor products
defined by bicharacters on grading groups.) 

Recall that the traditional Alexander-Whitney
and Eilenberg-Zilber maps convert between the reduced bar resolution
of a tensor product and the tensor product of the individual reduced bar resolutions:$$
\begin{aligned}
  \AW_n & : & (\overline{B}_{S\ot kG})_n  \ \longrightarrow&   \ 
  (\overline{B}_G\ot \, \overline{B}_S)_n
  , \\
  \EZ_n & : &  
  (\overline{B}_S\ot \, \overline{B}_G)_n
    \ \longrightarrow&  \ 
          (\overline{B}_{S\ot kG})_n .
\end{aligned} 
$$
%
We define analogs of these maps for
the skew group algebra $S\rtimes G$.  This algebra is $S\ot kG$ as a 
vector space but with multiplication
given by a twisting map $kG \otimes S \rightarrow S \ot kG$.
Consequently, we switch the order of $kG$ and $S$
when taking the tensor products of the individual bar resolutions.
We define maps, for each $n$, 
$$
\begin{aligned}
  \AW^G_n & : & (\overline{B}_{S\rtimes G})_n  \ \longrightarrow&   \ 
  (\overline{B}_G\ot^G \, \overline{B}_S)_n
  , \\
  \EZ^G_n & : &  
  (\overline{B}_G\ot^G \, \overline{B}_S)_n
    \ \longrightarrow&  \ 
          (\overline{B}_{S\rtimes G})_n .
\end{aligned} 
$$
For ease of notation, we write elements of $\overline{kG}$ and $\overline{S}$
just as if they were in $kG$ and $S$,
respectively,  invoking our choices of section maps,
and no confusion should arise. 

\subsection*{Group-twisted Alexander-Whitney map}
Define the first map by 
\[
\begin{aligned}
    \AW^G_n & (1\ot   s_1g_1 \ot\cdots\ot s_ng_n \ot 1 ) \\
  &  \hspace{3em}  
  =  
  \sum_{\ell=0}^n (-1)^{\ell(n-\ell)} (g_1\cdots 
   g_\ell) \ot g_{\ell+1}\ot\cdots \ot g_n\ot 1\, \ot
     \\
&\hspace{7em}
  1\ot{}^{(g_1\cdots g_n)^{-1}}s_1\ot\cdots\ot   {}^{(g_{\ell}\cdots g_n)^{-1}}s_\ell
   \ot  {}^{(g_{\ell+1}\cdots g_n)^{-1}}(s_{\ell+1} \cdots s_n)
\end{aligned}
\]
for $g_1,\ldots, g_n$ in $G$ and $s_1,\ldots,s_n$ in $S$. 
Here,
any tensor component in $k\cdot 1_G$ (respectively,  
$k\cdot 1_S$ or $k\cdot 1_{S\rtimes G}$)
is understood to be zero as an element of $\overline{kG}$
(respectively, $\overline{S}$ or $\overline{S\rtimes G}$). 
For each $n$, this defines the {\em group-twisted Alexander-Whitney map}
$\AW^G_n$ as an $(S\rtimes G)$-bimodule homomorphism,
since it is defined on a free basis. 

\subsection*{Group-twisted Eilenberg-Zilber map}
We shuffle the elements of $G$ with the elements of $S$ to define the second map.  Let $\mathfrak{S}_{n-\ell,\ell}$ be the set of {\em $(n-\ell,\ell)$-shuffles} in the symmetric group
$\mathfrak{S}_n$, i.e., permutations $\sigma$ of $1, \ldots, n$ with
\[
      \sigma(1)<\sigma(2)<\cdots <\sigma(n-\ell) \ \ \mbox{ and } \ \ 
      \sigma(n-\ell+1)<\sigma(n-\ell+2)<\cdots < \sigma(n) .
\]
We also count the number of {\em inversions} of a permutation
$\sigma$ in $\mathfrak{S}_n$: Set
\[
     |\sig| = | \{ (i,j) \mid 1\leq i<j\leq n \ \mbox{ and } \ \sig(i)>\sig(j)\}|\ .
\]
For each shuffle $\sig^{-1} \in \mathfrak{S}_{n-\ell,\ell}$
and tuple $(g_1,\ldots, g_{n-\ell}, s_1,\ldots, s_{\ell}) 
=(x_1,\ldots, x_{n})$, define
\[ 
    F_{\sig}(x_1\ot\cdots\ot x_n) =\
    ^{h_{\sig(1)}}(x_{\sig(1)})\ot\cdots\ot\, ^{h_{\sig(n)}}(x_{\sig(n)})
\]
where $h_i = 1_G$
for $1\leq i \leq n-\ell$ (so $\, ^{h_i}x_i = x_i$ for $x_i \in G$) and 
$$
h_i=\prod_{\substack{ \sig^{-1}(i)+1\,\leq\, j\,\leq n 
   \\ \sig(j)\, \leq\, n-\ell\rule{0ex}{1.5ex}}} 
   g_{\sig(j)}\, \text{ in } G
 \quad\text{ for } \ 
n-\ell+1\leq i\leq n \quad\text{(when $x_i \in S$)}.
$$
We define the {\em group-twisted Eilenberg-Zilber map} by
\[
\begin{aligned} 
   \EZ^G_n & (1\ot g_1\ot\cdots\ot g_{n-\ell}\ot 1\ot 1\ot s_1\ot\cdots\ot s_\ell\ot 1 )\\
 &\hspace{2em} =
   \sum_{\sig^{-1}\in \mathfrak{S}_{n-\ell,\ell}} (-1)^{|\sig|} \ot F_{\sig} (g_1\ot \cdots \ot g_{n-\ell}\ot
    s_1\ot\cdots\ot s_\ell) \ot 1
\end{aligned}
\] 
for all $g_1,\ldots, g_{n-\ell}$ in $G - \{1\}$ and 
$s_1,\ldots, s_\ell$ in $\overline{S}$.
For each $n$, this defines $\EZ^G_n$ as an $(S\rtimes G)$-bimodule homomorphism,
since it is defined  on a free basis. 

\subsection*{Group-twisted maps are chain maps}
These group-twisted maps convert between two resolutions of the algebra $S\rtimes G$:
the reduced bar resolution $\overline{B}_{S\rtimes G}$ on one hand
and the twisted product resolution $\overline{B}_{G}\otimes^{G}\,\overline{B}_S $
on the other hand.
\begin{lemma}
\label{lem:chainmaps}
The group-twisted maps  $\AW^G_n$ and $\EZ^G_n$ are chain maps:
$$
\entrymodifiers={+!!<0pt,\fontdimen22\textfont2>}
\xymatrixcolsep{6ex}
\xymatrixrowsep{8ex}
\xymatrix{
(\overline{B}_{S\rtimes G})_{n+1}   
\ar[rr]^{d_{n+1}}  \ar@<1ex>[d]^{\rm{AW}^G}
 &  & 
 (\overline{B}_{S\rtimes G})_{n}
\ar@<1ex>[d]^{\rm{AW}^G}
&  & 
 \\
(\overline{B}_{G} \otimes^{G} \overline{B}_S )_{n+1} 
 \ar[rr]^{d_{n+1}}  \ar[u]^{\rm{EZ}^G}  
 & & 
(\overline{B}_{G} \otimes^{G} \overline{B}_S )_{n} 
\ar[u]^{\rm{EZ}^G} 
 & &
}
$$
\end{lemma}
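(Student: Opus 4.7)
The plan is to verify the chain map property directly on the free basis of each resolution, since both $\AW^G_n$ and $\EZ^G_n$ are defined as $(S\rtimes G)$-bimodule homomorphisms on generators. For a generic basis element $1\otimes a_1\otimes\cdots\otimes a_n\otimes 1$ of $(\overline{B}_{S\rtimes G})_n$ with $a_i = s_i g_i$, the bar differential involves products $a_i a_{i+1} = s_i\cdot {}^{g_i}s_{i+1}\cdot (g_i g_{i+1})$, and this is exactly how the twisting couples the $G$-face maps and $S$-face maps appearing in the horizontal and vertical pieces of the total differential on $\overline{B}_G\otimes^G\overline{B}_S$.

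For $\AW^G$, I would split the total differential on $\overline{B}_G\otimes^G \overline{B}_S$ into a horizontal part $d_G$ acting on the $kG$-tensor factors and a vertical part $\pm d_S$ acting on the $S$-tensor factors (with the usual Koszul sign from the bidegree), and then apply both $d\circ \AW^G_n$ and $\AW^G_{n-1}\circ d_n$ to a basis element, matching summands according to the cut-point $\ell$. A face term $a_i a_{i+1}$ in $d_n$ contributes to $\AW^G_{n-1}\circ d_n$ in two ways: cut-points $\ell < i$ land in the $G$-block (producing the $g_i g_{i+1}$ multiplication, matching a $d_G$-term in $d\circ \AW^G_n$) and cut-points $\ell > i$ land in the $S$-block (producing the $s_i\cdot {}^{g_i}s_{i+1}$ multiplication with the appropriate conjugates, matching a $d_S$-term). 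At the boundary cut-point $\ell = i$, the outer face terms (first and last terms of each bar differential) account for the ``mixed'' junction, and the signs $(-1)^{\ell(n-\ell)}$ are designed precisely so that these boundary contributions cancel against the Koszul signs separating $d_G$ and $d_S$.

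For $\EZ^G$, I would apply $d_n$ to each shuffled summand $F_\sigma(x_1\otimes\cdots\otimes x_n)$ and group the face terms into three types: pure $G$-$G$ products (surviving when $\sigma^{-1}(k), \sigma^{-1}(k{+}1)$ both lie in the $G$-block), pure $S$-$S$ products (analogously for the $S$-block), and mixed $G$-$S$ products. The pure products, summed over shuffles, reproduce $\EZ^G_{n-1}$ applied to $d_G$ and $d_S$ respectively, once one verifies that the conjugation elements $h_i$ behave correctly under removal of one tensor slot. The mixed products must cancel in pairs: given a shuffle $\sigma$ with $\sigma^{-1}(k)\le n-\ell$ and $\sigma^{-1}(k{+}1) > n-\ell$, the shuffle $\sigma' = (k,k{+}1)\circ \sigma$ (coming from a different $(n-\ell,\ell)$-shuffle of the preceding or following block) produces the opposite sign $(-1)^{|\sigma'|} = -(-1)^{|\sigma|}$, and the definition of $h_i$ as a product over indices $j$ with $\sigma(j)\le n-\ell$ is engineered so that moving the $S$-factor past the $G$-factor via the twisting $g\cdot s = {}^gs\cdot g$ converts one mixed term into the negative of the other.

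The main obstacle will be the combinatorial bookkeeping in the $\EZ^G$ verification, specifically tracking how the twisting data $h_i$ changes under a transposition of adjacent shuffle entries and confirming that the resulting expressions really cancel on the nose after accounting for the conjugates $\,{}^{h_i}x_i$. The $\AW^G$ half is by comparison a more linear cut-point bookkeeping and should reduce to checking signs. In both cases, no deeper structural result is needed beyond the multiplication rule in $S\rtimes G$ and the standard bar differential; the definitions have been chosen so that these direct verifications succeed.
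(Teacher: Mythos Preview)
Your proposal is correct and follows essentially the same approach as the paper: both verify the chain map property by direct calculation, mirroring the classical Alexander--Whitney and Eilenberg--Zilber arguments while tracking the group action (the paper simply cites Mac\,Lane and illustrates with an explicit degree-$2$ computation rather than writing out the general cut-point and shuffle combinatorics you sketch). One small caution on conventions: in the paper it is $\sigma^{-1}$ that lies in $\mathfrak{S}_{n-\ell,\ell}$ and $F_\sigma$ places $x_{\sigma(k)}$ in position $k$, so the condition for a $G$-entry at position $k$ is $\sigma(k)\le n-\ell$ (not $\sigma^{-1}(k)\le n-\ell$), and the pairing transposition for the mixed-term cancellation should be $\sigma' = \sigma\circ(k,k{+}1)$; adjust your indexing accordingly when you carry out the bookkeeping.
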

\begin{proof}
A calculation shows that $\AW^G$ is a chain map just as in the case of 
the traditional Alexander-Whitney map for the tensor product of algebras
(see, e.g.,~\cite[(X.7.2)]{MacLane}).
For example, in degree~2, 
\begin{small}
\[
\begin{aligned}
d_2 \AW^G_2 & (1\ot s_1 g_1\ot s_2g_2\ot 1) \\
&= d_2 ((1\ot g_1\ot g_2\ot 1)\ot (1\ot {}^{g_1^{-1}g_2^{-1}}s_1{}^{g_2^{-1}}s_2) 
- (g_1\ot g_2\ot 1)
  \ot(1\ot {}^{g_1^{-1}g_2^{-1}}s_1\ot {}^{g_2^{-1}}s_2) \\
& \hspace{2cm}+ (g_1g_2\ot 1)\ot 
  (1\ot {}^{g_1^{-1}g_2^{-1}}s_1\ot {}^{g_2^{-1}}s_2\ot 1))\\
&= 
(g_1\ot g_2\ot 1)\ot (1\ot {}^{g_1^{-1}g_2^{-1}}s_1{}^{g_2^{-1}}s_2) - 
  (1\ot g_1g_2\ot 1)\ot (1\ot {}^{g_1^{-1}g_2^{-1}}s_1{}^{g_2^{-1}}s_2) 
  \\
  & \hspace{1cm}
   + (1\ot g_1\ot g_2)\ot
    (1\ot {}^{g_1^{-1}g_2^{-1}}s_1{}^{g_2^{-1}}s_2)
  - (g_1g_2\ot 1)\ot (1\ot {}^{g_1^{-1}g_2^{-1}}s_1\ot {}^{g_2^{-1}}s_2) \\
   & \hspace{1cm}
  + (g_1\ot g_2)\ot (1\ot {}^{g_1^{-1}g_2^{-1}}s_1
  \ot {}^{g_2^{-1}}s_2) 
+ (g_1\ot g_2\ot 1)\ot ({}^{g_1^{-1}g_2^{-1}}s_1\ot {}^{g_2^{-1}}s_2) \\
 & \hspace{1cm}
 - (g_1\ot g_2\ot 1)\ot (1\ot
   {}^{g_1^{-1}g_2^{-1}}s_1 {}^{g_2^{-1}}s_2)
  +
(g_1g_2\ot 1)\ot ( {}^{g_1^{-1}g_2^{-1}}s_1\ot {}^{g_2^{-1}}s_2\ot 1)\\
 & \hspace{1cm}
  - (g_1g_2\ot 1)\ot (1\ot {}^{g_1^{-1}g_2^{-1}}s_1{}^{g_2^{-1}}s_2\ot 1)
  + (g_1g_2\ot 1)\ot (1\ot {}^{g_1^{-1}g_2^{-1}}s_1\ot {}^{g_2^{-1}}s_2) ,
\end{aligned}
\]
\end{small}
whereas
\begin{small}
\[
\begin{aligned}
\AW_1^G d_2 &(1\ot s_1g_1\ot s_2g_2\ot 1) \\
 &= \AW_1(s_1g_1\ot s_2g_2\ot 1 - 1\ot s_1 {}^{g_1}s_2 g_1g_2\ot 1 + 1\ot s_1g_1
    \ot s_2g_2 ) \\
  &= s_1 g_1((1\ot g_2\ot 1) \ot (1\ot {}^{g_2^{-1}}s_2) + (g_2\ot 1)\ot 
(1\ot {}^{g_2^{-1}}s_2\ot 1))\\
 & \hspace{1cm}- (1\ot g_1g_2\ot 1) \ot (1\ot {}^{g_1^{-1}g_2^{-1}}s_1{}^{g_2^{-1}}s_2) 
- (g_1g_2\ot 1)\ot  (1\ot {}^{g_1^{-1}g_2^{-1}}s_1{}^{g_2^{-1}}s_2\ot 1)\\
 & \hspace{1cm}+ ((1\ot g_1\ot 1)\ot (1\ot {}^{g_1^{-1}}s_1) + 
 (g_1\ot 1)\ot (1\ot {}^{g_1^{-1}}s_1\ot 1))s_2g_2\\
& = (g_1\ot g_2\ot 1 ) \ot ({}^{g_1^{-1}g_2^{-1}}s_1\ot {}^{g_2^{-1}}s_2) 
   + (g_1g_2\ot 1)\ot ({}^{g_1^{-1}g_2^{-1}}s_1\ot {}^{g_2^{-1}}s_2\ot 1)\\
    &\hspace{1cm}- (1\ot g_1g_2\ot 1) \ot (1\ot {}^{g_1^{-1}g_2^{-1}}s_1{}^{g_2^{-1}}s_2) 
  - (g_1g_2\ot 1)\ot (1\ot {}^{g_1^{-1}g_2^{-1}}s_1 {}^{g_2^{-1}}s_2\ot 1)\\
 & \hspace{1cm}+ (1\ot g_1\ot g_2) \ot (1\ot {}^{g_1^{-1}g_2^{-1}}s_1{}^{g_2^{-1}}s_2) 
   + (g_1\ot g_2)\ot (1\ot {}^{g_1^{-1}g_2^{-1}}s_1\ot {}^{g_2^{-1}}s_2) , 
\end{aligned}
\]
\end{small}and we see that these two expressions are equal after some cancellations.
Similarly, to show $\EZ^G$ is a chain map, we follow the
proof that the classical Eilenberg-Zilber map $\EZ$ is a chain map but include
the group action.
(See, e.g.,~\cite[(VIII.8.9)]{MacLane} and~\cite{LZ}.)
\end{proof}

Next we see that the maps $\AW^G$, $\EZ^G$
provide a splitting of the reduced
bar resolution of $S\rtimes G$, 
with a copy of the resolution $\overline{B}_G\ot ^G \overline{B}_S$
as a direct summand. 

\begin{lemma}\label{lem:AW-EZ}
$ \ \AW^G \EZ^G$ is the identity map in each degree.
\end{lemma}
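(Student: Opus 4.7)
The plan is to evaluate $\AW^G_n \EZ^G_n$ directly on a typical basis element
\[
w \ =\ 1 \ot g_1 \ot \cdots \ot g_{n-\ell} \ot 1 \ot 1 \ot s_1 \ot \cdots \ot s_\ell \ot 1
\]
of the $\ell$-th summand of $(\overline{B}_G \ot^G \overline{B}_S)_n$, with each $g_i \in G-\{1\}$ and each $s_j$ nonzero in $\overline{S}$, and to show that all but one of the iterated summands vanish.

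The crucial observation is that every tensor entry in every term of $\EZ^G_n(w)$ is either purely a nonidentity element of $G$ or purely an element of $\overline{S}$; there is no interleaving within a single slot. Writing each entry as $s'_p g'_p \in S \rtimes G$, a pure-$G$ position has $s'_p = 1_S$ and a pure-$S$ position has $g'_p = 1_G$. The $\ell'$-th summand of $\AW^G_n$ requires $g'_{\ell'+1}, \ldots, g'_n$ to be nonidentity in $\overline{kG}$ and $s'_1, \ldots, s'_{\ell'}$ to be nonzero in $\overline{S}$; otherwise the reduced-bar convention kills the term. This forces positions $1, \ldots, \ell'$ to be pure-$S$ and positions $\ell'+1, \ldots, n$ to be pure-$G$, so only $\ell' = \ell$ survives, and only the unique shuffle $\sigma^{-1} \in \mathfrak{S}_{n-\ell, \ell}$ placing the $\ell$ algebra indices at positions $1, \ldots, \ell$ (in order) and the $n-\ell$ group indices at positions $\ell+1, \ldots, n$ (in order) contributes.

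Once this surviving configuration is pinned down, the rest is direct verification. The special shuffle has $|\sigma| = \ell(n-\ell)$ inversions (the only ones come from the $\ell\cdot(n-\ell)$ algebra/group crossings), so $(-1)^{|\sigma|}$ from $\EZ^G_n$ and $(-1)^{\ell(n-\ell)}$ from $\AW^G_n$ multiply to $+1$. Using the definition of $h_i$, one checks that the $F_\sigma$-output has entries ${}^{g_1 \cdots g_{n-\ell}} s_p$ at positions $p = 1, \ldots, \ell$ and $g_{p-\ell}$ at positions $p = \ell+1, \ldots, n$. Substituting into the $\ell$-th summand of $\AW^G_n$, the left $\overline{B}_G$-factor collapses to $1 \ot g_1 \ot \cdots \ot g_{n-\ell} \ot 1$ (since $g'_1 = \cdots = g'_\ell = 1$ and $g'_{\ell+1} \cdots g'_n = g_1 \cdots g_{n-\ell}$), while the conjugation by $(g'_i \cdots g'_n)^{-1} = (g_1 \cdots g_{n-\ell})^{-1}$ in the right $\overline{B}_S$-factor precisely cancels the $F_\sigma$-twist, yielding $1 \ot s_1 \ot \cdots \ot s_\ell \ot 1$. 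Together these reproduce $w$.

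The main obstacle is not conceptual but bookkeeping: one must untangle the double indexing in $F_\sigma$ (positions versus original slots) and verify that the accumulated $h$-twists from $\EZ^G_n$ are exactly the conjugations inverted by $\AW^G_n$ at the surviving shuffle, while also confirming that no other shuffle or split index can contribute.
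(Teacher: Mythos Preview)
Your argument is correct and follows the same approach as the paper---a direct computation exploiting the reduced-bar convention to kill all cross terms---but you carry it out in full generality, whereas the paper only invokes the classical untwisted case, asserts that the twisted verification is ``straightforward but tedious,'' and illustrates with a degree-$2$ example. Your explicit identification of the unique surviving shuffle and the sign/twist cancellation is exactly the content the paper leaves to the reader.
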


\begin{proof}
If the action of $G$ is trivial, we have the standard 
Alexander-Whitney and Eilenberg-Zilber maps, and
$\AW^1 \EZ^1$ is known to be the identity map~\cite[Remark 3.2]{LZ}. 
One can verify that $\AW^G \EZ^G$ is the identity
map for nontrivial group actions as well, via a straightforward
but tedious calculation, keeping careful track of group actions. 
For example, in degree~2, for all $g$ in $G$ and $s$ in $S$, 
\begin{eqnarray*}
\AW^G_2\EZ^G_2 ((1\ot g\ot 1)\ot (1\ot s\ot 1))
  &=& \AW^G_2 ( 1\ot g\ot s\ot 1 - 1\ot {}^gs\ot g\ot 1)\\
&=& (1\ot g\ot 1)\ot (1\ot s\ot 1) .
\end{eqnarray*}
Note that many terms in the image of $\AW^G_2$ are 0 since we
are working with the {\em reduced} (instead of unreduced) bar resolution. 
\end{proof}

\section{Conversion between resolutions}\label{sec:conversion}
In this section, we show that the group-twisted 
Alexander-Whitney and Eilenberg-Zilber maps
provide a way to convert between the reduced bar resolution
for a skew group algebra $S\rtimes G$
and a choice of twisted product resolution for $S\rtimes G$,
which generally can be much smaller. 
Compare to~\cite[Lemma~4.4]{SW-Koszul} for Koszul algebras;
our result here is stated for more general algebras $S$, and we provide
a proof based on the explicit chain maps $\AW^G$
and $\EZ^G$ defined above. 
We will see in the next section that
the maps $\AW^G$ and $\EZ^G$
have deformation-theoretic consequences. 

Again, consider compatible projective resolutions 
\begin{equation*}
\begin{aligned}
\text{(i)}\quad&
C_{\DOT}: \ \ \ldots \rightarrow C_2 \rightarrow C_1 \rightarrow C_0 \rightarrow 0
\ \ \  \text{ of } kG \text{ as a $kG$-bimodule, and}&
\\
\text{(ii)} \quad&
D_{\DOT}:\ \  \ldots \rightarrow D_2 \rightarrow D_1 \rightarrow D_0 \rightarrow 0\
\ \text{ of } S \text{ as an $S$-bimodule}&
\end{aligned}
\end{equation*}
as in Section~\ref{sec:NW1} and their
twisted product resolution $C\ot^G D$. 
We combine the group-twisted Alexander-Whitney
and Eilenberg-Zilber maps with chain maps converting 
between $C$, $D$ and
the reduced bar resolutions
$\overline{B}_G$, $\overline{B}_S$,
respectively, to construct a chain map
splitting
\vspace{3ex}
\begin{equation*}
\label{splitting}
\xymatrix{
  &  C\ot^G D
  \ar[rr] &
  & \overline{B}_{S\rtimes G}\ .
  \ar@(ul,ur)@{-->}[ll] \, 
}
\end{equation*}
We take the setting when chain maps 
$$
\begin{aligned}
&\iota_G : C\rightarrow \overline{B}_G,\quad 
&\pi_G :\overline{B}_G\rightarrow C,
\quad\quad
&\iota_S : D\rightarrow \overline{B}_S,\quad
&\pi_S: \overline{B}_S\rightarrow D
\end{aligned}
$$ 
are given for which
$\iota_S, \pi_S$ are $kG$-module homomorphisms
and $\pi_G\iota_G$ and $\pi_S\iota_S$ are identity maps in each degree:
$$
\hspace{5ex}
\mbox{
\entrymodifiers={+!!<0pt,\fontdimen22\textfont2>}
\xymatrixcolsep{6ex}
\xymatrixrowsep{6ex}
\xymatrix{
(\overline{B}_G )_{n}
\ar[rr]^{d_{n+1}}  \ar@<1ex>[d]^{\pi_G}
 &  & 
( \overline{B}_G )_{n+1}
\ar@<1ex>[d]^{\pi_G}
&  & 
 \\
C_{n+1} 
 \ar[rr]^{d_{n+1}}  \ar[u]^{\iota_G}  
 & & 
\ C_{n} 
\ar[u]^{\iota_G} 
 & &
}
}
\hspace{-7ex}
\mbox{
\entrymodifiers={+!!<0pt,\fontdimen22\textfont2>}
\xymatrixcolsep{6ex}
\xymatrixrowsep{6ex}
\xymatrix{
(\overline{B}_{S} )_{n}
\ar[rr]^{d_{n+1}}  \ar@<1ex>[d]^{\pi_S}
 &  & 
(\overline{B}_{S} )_{n+1}
\ar@<1ex>[d]^{\pi_S}
&  & 
 \\
D_{n+1} 
 \ar[rr]^{d_{n+1}}  \ar[u]^{\iota_S}  
 & & 
\ D_{n}
\ar[u]^{\iota_S} 
 . &  &
}
}
$$
For example, if $S=S(V)$ is a symmetric algebra on a finite dimensional
vector space $V$ with an action of $G$ by automorphisms, and $D$
is the Koszul resolution of $S$,
maps $\iota_S$, $\pi_S$ are constructed explicitly
in~\cite[(2.6) and (4.2)]{SW-quantum},
and the hypotheses of the next theorem hold. 

\begin{thm}\label{thm:chain}
Let $G$ be a finite group acting on a $k$-algebra $S$ by automorphisms.
Let $\overline{B}_{S\rtimes G}$ be the reduced bar resolution for
$S\rtimes G$. 
Let $C \ot ^G D$
be a twisted product resolution for
$S \rtimes G$ obtained by twisting together a resolution
$C_{\DOT}$ of $kG$ and a resolution $D_{\DOT}$ of $S$ as above.
There exist chain maps 
$$\iota: (C \ot ^G D)_{\bu}\rightarrow 
(\overline{B}_{S\rtimes G})_{\bu}
\quad\text{ and }\quad
\pi:(\overline{B}_{S\rtimes G})_{\bu}   
\rightarrow (C \ot ^G D)_{\bu}
$$ 
for which $\pi_n \iota _n: (C \ot ^G D)_n \rightarrow (C \ot ^G D)_n$ is the identity map for all $n$.
If $S$ is a graded algebra and $G$ consists of elements of degree~0,
then the chain maps $\iota$, $\pi$ are graded maps provided 
$\iota_S$ and $\pi_S$ are graded maps. 
\end{thm}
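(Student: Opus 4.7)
The plan is to build $\iota$ and $\pi$ as compositions of the group-twisted Eilenberg--Zilber and Alexander--Whitney maps with natural tensor products of the given chain maps, namely
\[
\iota \;=\; \EZ^G \circ (\iota_G \ot \iota_S),
\qquad
\pi \;=\; (\pi_G \ot \pi_S) \circ \AW^G,
\]
where $\iota_G \ot \iota_S$ and $\pi_G \ot \pi_S$ denote the maps between the total complexes of $C\ot D$ and $\overline{B}_G \ot \overline{B}_S$ induced by tensoring the given chain maps of resolutions.

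The main step, and the only point requiring real care, will be to verify that $\iota_G \ot \iota_S$ and $\pi_G \ot \pi_S$ descend to $(S\rtimes G)$-bimodule chain maps between $C\ot^G D$ and $\overline{B}_G \ot^G \overline{B}_S$. The chain-map property uses only that each factor is a chain map. For the bimodule compatibility I will use the explicit structure formula of Section~\ref{sec:NW1}, together with the facts that $\iota_G, \pi_G$ are $kG$-bimodule maps preserving the $G$-gradings (which can be arranged via the standard comparison theorem in the category of $G$-graded $kG$-bimodules, since both $C$ and $\overline{B}_G$ live there) and that $\iota_S, \pi_S$ are $S$-bimodule and $kG$-equivariant. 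A direct substitution in the formula
$s\cdot(c\ot d)\cdot h = ch\ot{}^{(gh)^{-1}}\!s\cdot {}^{h^{-1}}\!d$ for $c\in C_g$ then shows $\iota_G \ot \iota_S$ intertwines the two twisted bimodule structures, and similarly for $\pi_G \ot \pi_S$.

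Granting this, Lemma~\ref{lem:chainmaps} immediately yields that $\iota$ and $\pi$ are chain maps of $(S\rtimes G)$-bimodule resolutions of $S\rtimes G$. The splitting identity $\pi\iota = \id$ then follows in one line:
\[
\pi\iota
\;=\; (\pi_G\ot\pi_S)\circ \AW^G \circ \EZ^G\circ(\iota_G\ot\iota_S)
\;=\; (\pi_G\iota_G)\ot(\pi_S\iota_S)
\;=\; \id,
\]
using Lemma~\ref{lem:AW-EZ} in the middle together with the hypotheses $\pi_G\iota_G = \id$ and $\pi_S\iota_S = \id$.

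For the graded statement, when $S$ is graded with $G$ acting by degree-zero automorphisms, the reduced bar resolution $\overline{B}_S$ inherits a grading while $kG$, $C$, and $\overline{B}_G$ sit in degree zero. Inspection of the formulas of Section~\ref{sec:AW-EZ} shows that $\AW^G$ and $\EZ^G$ only permute existing $S$-factors under a group twist and are therefore graded; so $\iota$ and $\pi$ are graded as soon as $\iota_S$ and $\pi_S$ are. The hard part throughout is the bimodule compatibility verification in the second paragraph; once the explicit twisted structure formula is unpacked, everything else is a formal composition of known chain maps.
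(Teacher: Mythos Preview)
Your proposal is correct and follows essentially the same approach as the paper: the authors likewise set $\iota = \EZ^G\circ(\iota_G\ot\iota_S)$ and $\pi = (\pi_G\ot\pi_S)\circ\AW^G$ and then invoke Lemmas~\ref{lem:chainmaps} and~\ref{lem:AW-EZ} together with the hypotheses $\pi_G\iota_G=\id$, $\pi_S\iota_S=\id$. Your discussion of why $\iota_G\ot\iota_S$ and $\pi_G\ot\pi_S$ are $(S\rtimes G)$-bimodule maps (requiring $G$-grading compatibility of $\iota_G,\pi_G$ and $kG$-equivariance of $\iota_S,\pi_S$) makes explicit a point the paper leaves implicit, but the overall argument is the same.
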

\begin{proof}
Define compositions
\begin{equation}\label{eqn:iota-pi}
\iota = \EZ^G (\iota_G\ot \iota_S)
     \ \ \ \mbox{ and } \ \ \ 
   \pi= (\pi_G\ot \pi_S) \AW^G , 
\end{equation}
which can be visualized via a diagram: 
$$
\entrymodifiers={+!!<0pt,\fontdimen22\textfont2>}
\xymatrixcolsep{2ex}
\xymatrixrowsep{8ex}
\xymatrix{
& &
(\overline{B}_{S\rtimes G})_{n+1}   
\ar[rr]  \ar@<1ex>[d]^{\rm{AW}^G}
&  & 
 (\overline{B}_{S\rtimes G})_{n}
\ar@<1ex>[d]^{\rm{AW}^G}
& \ar@(dr, ur)[dd] ^{\pi} & 
 \\
& & 
 (\overline{B}_{G} \otimes^{G} \overline{B}_S )_{n+1} 
 \ar[rr]  \ar[u]^{\rm{EZ}^G}
 \ar@<1ex>[d]^{\pi_G \ot \pi_S} 
 & & 
(\overline{B}_{G} \otimes^{G} \overline{B}_S )_{n} 
\ar[u]^{\rm{EZ}^G}  
\ar@<1ex>[d]^{\pi_G \ot \pi_S}  
&  & 
  \\ 
& \ar@(ul, dl)[uu]^{\iota}
&
(C \otimes^{G} D )_{n+1} 
 \ar[rr]  \ar[u]^{\iota_G \ot\,  \iota_S}
 & & 
(C \otimes^{G} D )_{n} 
\ar[u]^{\iota_G \ot\,  \iota_S}
  & &
}
$$
By Lemmas~\ref{lem:chainmaps} 
and~\ref{lem:AW-EZ} and the hypotheses on $\iota_G,\iota_S,\pi_G,\pi_S$,
the composition $\pi\iota$ is $1$.
\end{proof} 

\section{Applications to deformation theory}\label{sec:deformations}

We now consider some applications of Theorem~\ref{thm:chain}.
We used Hochschild cohomology 
to analyze
PBW deformations of $S\rtimes G$ 
in~\cite{ueber} and~\cite{SW-Koszul},
particularly in the case that $S=S(V)$
is a symmetric algebra.
Our Theorem~\ref{thm:chain} generalizes both~\cite[Lemma~7.3]{ueber}
and~\cite[Lemma~4.4]{SW-Koszul}, which
provide an explicit connection between
Hochschild cocycles and PBW deformations.
We explain here how Theorem~\ref{thm:chain}
gives a unified proof of this connection using
the group-twisted Alexander-Whitney and Eilenberg-Zilber maps.
This approach is more elegant than the somewhat ad hoc proofs given in~\cite{ueber,SW-Koszul}.  It also more easily applies to other settings.
We emphasize that we did not assume $S$ is Koszul in
Theorem~\ref{thm:chain}
in the last section, although $S$ will be a Koszul algebra
here.

Let $V$ be a finite dimensional $kG$-module and $S=S(V)$, the 
symmetric algebra on the underlying vector space $V$.
Consider some linear parameter functions
$$
\kappa:V\ot V \rightarrow kG \ \ \ \mbox{ and } \ \ \ 
\ld:kG\ot    V \rightarrow kG\, 
$$
with $\kappa$ alternating.
We view $\kappa$ as a function on the exterior power $\Wedge^2V$.

Let
$\cH_{\ld,\kappa} $ 
be the associative $k$-algebra generated by a basis of $V$ 
and the group algebra $kG$ with relations given by those of $kG$ 
together with
\begin{equation*}
\begin{aligned}
vw-wv=\kappa(v\wedge w)\quad\text{ and }\quad
gv- \,  ^g\! v g = \ld(g\otimes v)
\quad\text{ for }v,w\in V,\ g \in G.
\end{aligned}
\end{equation*}
Then $\cH_{\ld,\kappa}$ 
is a filtered algebra with $\deg(v) =1$ and $\deg(g)=0$
for all $v$ in $V$ and $g$ in $G$.
We call $\cH_{\lambda,\kappa}$ a {\em PBW deformation} of $S(V)\rtimes G$ if
$\gr\cH_{\ld,\kappa}\cong S(V)\rtimes G$, that is, the associated
graded algebra of $\cH_{\ld,\kappa}$ is isomorphic to the skew group
algebra.

Over fields $k$ of characteristic $0$,
the PBW deformations $\cH_{\ld,\kappa}$ with $\kappa\equiv 0$
include the affine graded Hecke algebras
defined by Lusztig~\cite{Lusztig88,Lusztig89};
those with $\lambda\equiv 0$
include the symplectic reflection algebras and Drinfeld Hecke
algebras (see~\cite{EG,Griffeth,RamShepler}).
Note that over fields of characteristic $0$
(or more generally coprime to the group order $|G|$),
every PBW algebra of the form
$\cH_{\ld,0}$ is isomorphic to one of the form
$\cH_{0,\kappa}$,
but this fact does not hold
in positive characteristic (see~\cite[Theorem~4.1, Example 5.1]{ueber}).

The PBW property imposes conditions on $\kappa,\lambda$, as given in the 
following theorem, which is~\cite[Theorem~3.1]{ueber}.
Observe that when $\lambda$ is zero,
all but conditions~(2) and~(4) in the theorem
are trivial. 

\begin{thm}\label{thm:PBWconditions}
Let $k$ be a field of arbitrary characteristic.
The algebra $\cH_{\ld,\kappa}$ is a PBW deformation
of $S(V)\rtimes G$ if and only if 
the following conditions hold
for all $g,h$ in $G$ and $u,v,w$ in $V$.
\begin{enumerate}
\item\label{cocycle21}
\, \ \rule[0ex]{0ex}{3ex}
$\ld(gh,v)=\ld(g,\, ^hv) h + g\ld(h,v)$ in $kG$.
\item\label{firstobstruction12}\ \ \rule[0ex]{0ex}{3ex}
$\kappa(\, ^gu,\, ^g v)g-g\kappa(u,v)
=
\ld\bigl(\ld(g,v),u\bigr)-\ld\bigl(\ld(g,u),v\bigr)$ in $kG$.
\item\label{cocycle12}\ \ \rule[0ex]{0ex}{3ex}
$\ld_h(g,v)(\, ^hu-\, ^gu)=\ld_h(g,u)(\, ^hv-\, ^gv)
$ in $V$.
\item\label{firstobstruction03}
\ \ \rule[0ex]{0ex}{3ex}
$\kappa_g(u,v)(^gw-w)+\kappa_g(v,w)(^gu-u)+\kappa_g(w,u)(^gv-v)=0$ in $V$.
\item\label{mixedbracket}
\ \ \rule[0ex]{0ex}{3ex}
$\ld\bigl(\kappa(u,v),w\bigr)
+\ld\bigl(\kappa(v,w),u\bigr)+\ld\bigl(\kappa(w,u),v\bigr)=0$ in $kG$.
\end{enumerate}
\end{thm}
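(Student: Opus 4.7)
The plan is to deduce the theorem from Theorem~\ref{thm:chain} by encoding the pair $(\ld,\kappa)$ as a Hochschild $2$-cochain on a convenient twisted product resolution for $S(V)\rtimes G$, transporting the Maurer--Cartan equations that govern PBW deformations between that resolution and the reduced bar resolution through the chain maps $\iota,\pi$ of Theorem~\ref{thm:chain}, and then reading off the resulting equations bidegree by bidegree. Take $D$ to be the Koszul resolution of $S=S(V)$ and $C=\overline{B}_G$, so that
\[
  (C\ot^G D)_2 \;=\; (C_2\ot D_0)\oplus (C_1\ot D_1)\oplus (C_0\ot D_2)\, .
\]
The parameter $\ld$ defines an $(S\rtimes G)$-bimodule cochain on $C_1\ot D_1\cong kG\ot\overline{kG}\ot kG\ot S\ot V\ot S$, and $\kappa$ defines one on $C_0\ot D_2\cong kG\ot\Wedge^2 V\ot kG$; together they specify a single $2$-cochain $\psi\colon (C\ot^G D)_2\to S\rtimes G$.

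By the standard Hochschild-cohomological criterion for PBW deformations applied on $\overline{B}_{S\rtimes G}$, the filtered algebra $\cH_{\ld,\kappa}$ is a PBW deformation of $S(V)\rtimes G$ precisely when the transferred cochain $\tilde\psi=\psi\circ\pi$ is a Hochschild $2$-cocycle whose Gerstenhaber square bracket $[\tilde\psi,\tilde\psi]$ is a coboundary. Theorem~\ref{thm:chain} supplies $\pi\iota=\id$ on $(C\ot^G D)_\bu$, so both equations may be tested after precomposition with $\iota$; equivalently, one evaluates $\psi$ directly on the $\iota$-images of generators of $(C\ot^G D)_2$ and $(C\ot^G D)_3$ inside $\overline{B}_{S\rtimes G}$. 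This replaces an otherwise cumbersome bar-resolution computation with a finite bigraded calculation on the much smaller twisted product resolution.

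The central task is then to unpack the Maurer--Cartan equation bidegree by bidegree on
\[
  (C\ot^G D)_3 \;=\; (C_3\ot D_0)\oplus(C_2\ot D_1)\oplus(C_1\ot D_2)\oplus(C_0\ot D_3)\, .
\]
The linear (cocycle) part splits into three contributions: the bidegree-$(2,1)$ piece is the horizontal group cocycle identity for $\ld$ and produces condition~(1); a mixed $(1,2)$ piece, in which the vertical Koszul differential meets $\ld$, produces condition~(3); and the $(0,3)$ piece, where the Koszul differential on $D_3\cong S\ot\Wedge^3 V\ot S$ is applied to $\kappa$, produces the alternating identity~(4). The quadratic (obstruction) part contributes the remaining two conditions: the $(1,2)$ piece couples the failure of $G$-equivariance of $\kappa$ with the self-Jacobiator of $\ld$ to give condition~(2), and the $(0,3)$ piece evaluates $\ld$ on the $kG$-output of $\kappa$ to give the mixed Jacobi identity~(5).

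The hardest part will be the careful bookkeeping of the group actions introduced by the shuffle factors $F_\sig$ in the definition of $\EZ^G$, in particular the auxiliary conjugations $h_i$. The twists ${}^{g^{-1}}(-)$ and differences $({}^h u-{}^g u)$ appearing in conditions~(2)--(5) come directly from these $h_i$'s, and matching them with the intrinsic Maurer--Cartan equation on the bar side requires careful tracking of signs and group elements through a shuffle sum. Once this bookkeeping is in place, the five conditions fall out uniformly from the bigraded decomposition, which is the conceptual advantage of this approach over the case-by-case arguments in~\cite{ueber,SW-Koszul}.
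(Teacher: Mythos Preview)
The paper does not prove Theorem~\ref{thm:PBWconditions} at all: it is quoted from \cite[Theorem~3.1]{ueber}, where it is established combinatorially via Bergman's Diamond Lemma. The chain maps of Theorem~\ref{thm:chain} are then used to prove Theorem~\ref{thm:PBWcohomologyconditions}, which \emph{reinterprets} the five conditions cohomologically \emph{given} that Theorem~\ref{thm:PBWconditions} already holds. Your proposal reverses this logic, attempting to derive the five conditions directly from a Maurer--Cartan argument, and this creates two genuine gaps.

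First, you invoke a ``standard Hochschild-cohomological criterion for PBW deformations'' on $\overline{B}_{S\rtimes G}$ as though it were available a priori. When $kG$ is semisimple one can appeal to Braverman--Gaitsgory--type results for Koszul algebras over $kG$, but the theorem is stated over a field of arbitrary characteristic, where $kG$ need not be semisimple and $S(V)\rtimes G$ is not Koszul over $k$. In that generality there is no off-the-shelf equivalence ``PBW $\Leftrightarrow$ Maurer--Cartan''; indeed, this is precisely why \cite{ueber} resorts to the Diamond Lemma. Your argument therefore assumes the hard direction of what is to be proved.

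Second, even granting such a criterion, the version you state is not correct. Packaging $\ld$ and $\kappa$ into a single ungraded $2$-cochain $\psi$ and demanding that $\tilde\psi$ be a cocycle with $[\tilde\psi,\tilde\psi]$ a coboundary loses the internal grading that separates $\ld$ (degree~$-1$) from $\kappa$ (degree~$-2$). The actual conditions, as in Theorem~\ref{thm:PBWcohomologyconditions}, are $d^*\ld=0$, $[\ld,\ld]=2\,d^*\kappa$, and $[\ld,\kappa]=0$. In particular $d^*\kappa$ is \emph{not} required to vanish: on $X_{1,2}$ one needs $2\,d^*\kappa=[\ld,\ld]$, and the right-hand side is generically nonzero, which is exactly where condition~(2) comes from. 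Your assignment of (1),(3),(4) to a ``cocycle part'' and (2),(5) to an ``obstruction part'' of a single equation for $\psi$ thus does not match the correct graded decomposition; conditions~(2) and~(4) both live in the equation $[\ld,\ld]=2\,d^*\kappa$ (evaluated on $X_{1,2}$ and $X_{0,3}$ respectively), while (1) and (3) come from $d^*\ld=0$ and (5) from $[\ld,\kappa]=0$.
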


The five conditions in Theorem~\ref{thm:PBWconditions} 
may not seem intuitive at first glance,
and the proof given in~\cite{ueber} indeed uses the Diamond Lemma
(see~\cite{Bokut} and~\cite{Bergman}).
Hochschild cohomology gives a natural interpretation for the 
five conditions in terms of Gerstenhaber brackets and
cocycles, as explained in~\cite[Section~8]{ueber}.
There, we considered 
a chain map from a twisted product resolution $X_{\bu}$ to the bar resolution,
$$\phi_{\bu}: X_{\bu} \longrightarrow B_{S\rtimes G},
$$ 
where $X_{\bu}=C\ot^G D$ is defined as in~(\ref{eqn:res-X})
for $C$ the bar resolution of $kG$ and $D$ the Koszul resolution of $S$. 
We used this chain map 
to translate the five conditions of Theorem~\ref{thm:PBWconditions} 
into provisions more natural 
in the architecture of cohomology.
But the chain map $\phi_{\bu}$ was constructed there in an improvised
and utilitarian way.
We now explain how to use Theorem~\ref{thm:chain} instead to give a direct
proof of~\cite[Theorem 8.3]{ueber}:

\begin{thm}\label{thm:PBWcohomologyconditions}
Let $k$ be a field of arbitrary characteristic.
The algebra $\cH_{\ld,\kappa}$ 
exhibits the PBW property if and only if
\begin{itemize}
\item
$d^*(\lambda)=0$,
\item
$[\lambda,\lambda]=2d^*(\kappa)$, and
\item
$[\lambda,\kappa]= 0 $
\end{itemize}
as Hochschild cochains, 
where $\lambda$ and $\kappa$ are identified with
cochains on the resolution $X_{\DOT}$.
\end{thm}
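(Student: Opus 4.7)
The plan is to combine Theorem~\ref{thm:PBWconditions} with Theorem~\ref{thm:chain}: instead of redoing the five-condition analysis from scratch, I would use the explicit chain maps $\iota=\EZ^G(\iota_G\otimes\iota_S)$ and $\pi=(\pi_G\otimes\pi_S)\AW^G$ of Theorem~\ref{thm:chain} to transfer Hochschild cochain structure between the reduced bar resolution $\overline{B}_{S\rtimes G}$ and the twisted product resolution $X_{\DOT}=C\otimes^G D$, where $C$ is the bar resolution of $kG$ and $D$ is the Koszul resolution of $S(V)$. First I would identify $\lambda$ and $\kappa$ as Hochschild $2$-cochains in $\Hom_{(S\rtimes G)^e}(X_2,S\rtimes G)$ using the bidegree decomposition $X_2=(C_2\otimes D_0)\oplus(C_1\otimes D_1)\oplus(C_0\otimes D_2)$: the parameter $\lambda:kG\otimes V\to kG$ naturally sits on $C_1\otimes D_1$, while $\kappa:\Wedge^2 V\to kG$ sits on $C_0\otimes D_2$ under the Koszul identification.

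Next I would translate ``$\cH_{\lambda,\kappa}$ is a PBW deformation'' into a Maurer--Cartan-type condition. On the bar resolution this is the classical Gerstenhaber statement that the $2$-cochain encoding the commutator relations is a low-degree Maurer--Cartan element $\mu$ satisfying $d^*\mu+\tfrac12[\mu,\mu]=0$. Because Theorem~\ref{thm:chain} produces $\iota,\pi$ with $\pi\iota=1$, the Hochschild complex on $X_{\DOT}$ embeds as a direct summand of that on $\overline{B}_{S\rtimes G}$, so one may import both $d^*$ and the Gerstenhaber bracket to $X_{\DOT}$ via $[\alpha,\beta]_X:=\pi^*[\iota^*\alpha,\iota^*\beta]$. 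Applied to $\mu=\lambda+\kappa$ and split by the bidegree inherited from $C_i\otimes D_j$, the single Maurer--Cartan equation separates into three homogeneous pieces, which are precisely $d^*\lambda=0$, $[\lambda,\lambda]=2d^*\kappa$, and $[\lambda,\kappa]=0$ (with the remaining potential piece $[\kappa,\kappa]=0$ in a bidegree that carries no cochains, so automatic).

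To confirm equivalence with Theorem~\ref{thm:PBWconditions}, I would evaluate each cochain identity on the basis of $X_3=\bigoplus_{i+j=3}C_i\otimes D_j$ and match summand by summand: $d^*\lambda=0$ restricted to $C_2\otimes D_1$ recovers~(\ref{cocycle21}) while its restriction to $C_1\otimes D_2$ recovers~(\ref{cocycle12}); $[\lambda,\lambda]=2d^*\kappa$ on $C_1\otimes D_2$ gives~(\ref{firstobstruction12}) and on $C_0\otimes D_3$ gives~(\ref{firstobstruction03}); and $[\lambda,\kappa]=0$ on $C_0\otimes D_3$ reproduces~(\ref{mixedbracket}). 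This bidegree bookkeeping is exactly what Theorem~\ref{thm:chain} makes clean, in contrast with the ad hoc chain map $\phi_{\DOT}$ constructed in~\cite{ueber}.

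The main obstacle is the actual computation of the brackets $[\lambda,\lambda]$ and $[\lambda,\kappa]$ on $X$-cochains. Since the Gerstenhaber bracket is canonically formulated on the bar complex, carrying it to $X_{\DOT}$ requires pushing inputs up through the shuffle map $\EZ^G$, composing the resulting bar cochains in the usual Gerstenhaber way, and then pulling back through the twist map $\AW^G$; the group actions embedded in the shuffle factors $F_\sigma$ and in the twisting terms of $\AW^G$ must be tracked with care to produce the correct $kG$-valued outputs. Once these bracket formulas are made explicit on the relevant generators of $C_i\otimes D_j$, the remaining verification of the summand-by-summand matching with conditions~(\ref{cocycle21})--(\ref{mixedbracket}) reduces to a direct comparison of expressions.
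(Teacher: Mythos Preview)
Your plan is essentially the paper's own proof: it too takes $C=\overline{B}_{kG}$, $D=\text{Kosz}_{S(V)}$, invokes Theorem~\ref{thm:chain} to obtain $\iota,\pi$ with $\pi\iota=1$, identifies $\lambda$ and $\kappa$ as cochains on $X_{1,1}$ and $X_{0,2}$, and then matches the five conditions of Theorem~\ref{thm:PBWconditions} to the three cohomological identities via exactly the same bidegree bookkeeping you describe (conditions (1),(3) on $X_{2,1},X_{1,2}$ give $d^*\lambda=0$; (2),(4) on $X_{1,2},X_{0,3}$ give $[\lambda,\lambda]=2d^*\kappa$; (5) gives $[\lambda,\kappa]=0$), deferring the explicit bracket computations to~\cite{ueber}. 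Your Maurer--Cartan packaging $d^*\mu+\tfrac12[\mu,\mu]=0$ for $\mu=\lambda+\kappa$ is a cosmetic reorganization of the same content.
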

\begin{proof}
Let $\text{Kosz}_{S(V)}$ be the Koszul resolution of the polynomial ring
$S(V)$ and set
$$
S=S(V), \quad D=\text{Kosz}_{S(V)}, \quad
C=\overline{B}_{kG},
\quad\text{ and }
X=C\ot^G D\, .
$$
We take the standard embedding $\iota_S=\text{Kosz}_{S}\hookrightarrow \overline{B}_{S}$
and set $\iota_C = 1$, the identity map on $\overline{B}_{kG}$.
Then Theorem~\ref{thm:chain} implies existence of maps 
$$ 
 \iota : (C\ot ^G D)_{\bu} \rightarrow (\overline{B}_{S\rtimes G})_{\bu} 
  \ \ \ \mbox{ and } \ \ \ \pi :(\overline{B}_{S\rtimes G})_{\bu}\rightarrow
  (C\ot^G D)_{\bu} 
$$
for which $\pi\iota$ is the identity map in each degree. 
The map $\phi_{\bu}$ constructed in~\cite[Lemma 7.3]{ueber}
is essentially our map $\iota$:
There, $C$ was the unreduced bar resolution of $kG$,
whereas here we use the reduced
bar resolution instead 
so that $\AW^G \EZ^G=1$ 
in general,
avoiding the need for~\cite[Lemma 7.3]{ueber}.

We identify
the parameter
$\lambda$ of the algebra $H_{\lambda, \kappa}$
with a cochain in 
$$\Hom_{(S\rtimes G)^e}(X_{1,1}, S\rtimes G),
$$ 
in the following way:
Since $\lambda$ factors through the quotient 
$kG\ot V\rightarrow \overline{kG}\ot V$,
we may view $\lambda$
as a function $X_{1,1} \rightarrow kG$;
composing with the chain map $\pi$
from Theorem~\ref{thm:chain}
gives a cochain $\mu_1=\pi\otimes\lambda$
in
$\Hom_{(S\rtimes G)^e}(\overline{B}_{S\rtimes G} , S\rtimes G)$ 
for which
\[
   \mu_1 (1\ot g\ot v\ot 1) - \mu_1(1\ot {}^g v\ot g\ot 1)
   = \lambda(g\ot v)
\quad\text{ for } g\in G, v\in V. \]
Here, one recycles
part of the calculation from the proof of Lemma~\ref{lem:AW-EZ}.
Note that $(S\rtimes G)^e$ denotes the enveloping algebra $(S\rtimes G)\ot (S\rtimes G)^{\text{op}}$.

Similarly, we identify $\kappa$ with a cochain $\mu_2$ in 
$\Hom_{(S\rtimes G)^e} (X_{0,2} , S\rtimes G)$
by composing again with $\pi$;
specifically, $\mu_2$ in
$\Hom_{(S\rtimes G)^e}(\overline{B}_{S\rtimes G} , S\rtimes G)$
is the cochain for which
\[
   \kappa (v\wedge w) = \mu_2(1\ot v\ot w\ot 1) - \mu_2(1\ot w\ot v\ot 1)
   \quad\text{ for } v,w\in V. 
   \]

Deformation theory
gives Hochschild conditions necessary for a cochain on the bar resolution
of an algebra to define a deformation of that algebra 
(see~\cite{GerstenhaberSchack}).
We apply these conditions
specifically to the cochains $\mu_1,\mu_2$ 
and then translate via $\iota$ to conditions on $\lambda,\kappa$.
We find that 
Conditions~(1) and~(3) of Theorem~\ref{thm:PBWconditions}
hold exactly when $\lambda$ is a cocycle
as evaluated on $X_{2,1}$ and on $X_{1,2}$, i.e.,
$d^*(\lambda)=0$.
Conditions~(2) and~(4) hold exactly when
the Gerstenhaber bracket $[\lambda,\lambda]$
coincides with $2d^*(\kappa)$
as evaluated on $X_{1,2}$ and on $X_{0,3}$.
Condition~(5) holds exactly when
the Gerstenhaber bracket $[\lambda,\kappa]$ vanishes.
Details are in the proof of~\cite[Theorem~8.3]{ueber}.
\end{proof}

\begin{remark}{\em 
While full details of the proof of Theorem~\ref{thm:PBWcohomologyconditions}
are in~\cite{ueber}, we emphasize that 
our contribution here is to show that the needed chain maps arise
automatically
from group-twisted Alexander-Whitney and
Eilenberg-Zilber maps.
In this way, these maps provide a general method
for converting from Diamond Lemma conditions to homological conditions, giving a more unified and less ad hoc approach
toward studying deformations.
We expect that this approach will lend itself
to interesting generalizations.
} 
\end{remark}


\begin{remark}{\em 
Similarly, in~\cite{SW-Koszul}, we 
consider the case when $S$ is a quadratic algebra,
and especially the case when $S$ is Koszul.
Again, we used an explicit chain map established
with improvised methods 
(see~\cite[Lemmas~4.4 and~4.6]{SW-Koszul})
to convert between resolutions 
and study explicit conditions giving PBW deformations
of $S\rtimes G$.
Indeed, the proof of~\cite[Theorem~2.5]{SW-Koszul} relies on
explicit information provided by chain maps.
Here again, chain maps may be taken from 
Theorem~\ref{thm:chain} instead to give a more
conceptual view of the deformation-theoretic results in~\cite{SW-Koszul}. 
}
\end{remark}

\end{document}